\newcommand{\floor}[1]{\lfloor {#1} \rfloor}
\newtheorem{lem}{Lemma}
\newtheorem{thm}{Theorem}
\title{Sums and differences of sets: a further improvement over AlphaEvolve}
\author{Fan Zheng}
\begin{document}
\maketitle

\begin{abstract}
We present a new advancement in the sum and difference of sets problem, which improves upon recent results by both DeepMind's AlphaEvolve ($\theta = 1.1584$) and subsequent explicit constructions ($\theta = 1.173050$). In this work, we construct a sequence of $U$ sets which in the limit establishes a new lower bound of $\theta = 1.173077$.
\end{abstract}

\noindent\textbf{Keywords and phrases:} Sumset, Difference set, AlphaEvolve

\medskip

\noindent\textbf{MSC (2020):} 11B75

\section{Introduction}

Let \( K > 1 \) be a real number. In \cite{gyarmati2007}, the authors studied the growth of sumsets and difference sets of integers and posed the problem of determining the largest possible exponent \( \theta \) such that, for arbitrarily large finite sets \( A \) and \( B \) of integers, the following holds for some constant \( c(K) > 0 \):
\begin{equation}
    |A + B| \leq K|A| \quad \text{and} \quad |A - B| \geq c(K) \cdot |A + B|^\theta.
    \label{eq:theta}
\end{equation}

They showed that for any finite set \( U \subset \mathbb{N} \) containing zero, the inequality
\[
\theta \geq 1 + \frac{\log\left(\frac{|U - U|}{|U + U|}\right)}{\log(2\max(U) + 1)}
\]
provides a lower bound on the attainable exponent \( \theta \). Their constructions were based on a combinatorial set \( V = V(m, L) \subset \mathbb{N}^m \) defined by
\[
V(m, L) = \left\{ (x_1, \ldots, x_m) \in \mathbb{N}^m : x_1 + \cdots + x_m \leq L \right\},
\]
which satisfies \( |V(m, L)| = \binom{m + L}{m} \). An injective mapping \( f \) was used to transform vectors into integers via
\[
f(x_1, \ldots, x_m) = \sum_{k=0}^{m-1} x_k \cdot L_k, \quad \text{with } L_0 = 1, \quad L_k = 2L \cdot L_{k-1} + 1 \text{ for } k > 0,
\]
ensuring that \( f \) is injective on both \( V + V \) and \( V - V \). The image \( U = f(V) \) then served as the key set for evaluating \( \theta \). This approach established the bound \( \theta \geq 1.14465 \).

More recently, DeepMind's AlphaEvolve system \cite{deepmind} applied large language models to this problem and achieved an improved bound of \( \theta = 1.1584 \) using a set of 54{,}265 integers. A subsequent explicit construction \cite{gerbicz2025} raised this to \( \theta = 1.173050 \) using a \( U \) set with over $10^{43{,}546}$ elements.

In this work, we construct a sequence of \( U \) sets which, in the limit, establishes a new lower bound of
\[
\theta = 1.173077.
\]
Our approach builds on the previous combinatorial framework and extends it to \( U \) sets of arbitrarily large size. To achieve this, we analyze the asymptotic behavior of sets of the form
\[
W(m, L, B) = \left\{ x = (x_1, \ldots, x_m) \in V(m, L): x_1 \leq B, \ldots, x_m \leq B \right\},
\]
where each coordinate is individually bounded. Estimating the size of such sets for large \( m \) and \( L \) requires a new ingredient: large deviation estimates, which allow us to quantify the distribution of constrained integer partitions and obtain precise asymptotic counts necessary for bounding \( \theta \).

\section{Large deviation estimates}
Let $m$, $L$ and $B$ be nonnegative integers. Define:
\[
W(m, L, B) = \left\{ x = (x_1, \ldots, x_m) \in V(m, L): x_1 \leq B, \ldots, x_m \leq B \right\}.
\]
\begin{lem}\label{large-deviation}
\[
\lim_{m\to\infty} \frac{\log|W(m, \floor{rm}, B)|}{m} = \log(B + 1) - I(r, B),
\]
where
\[
I(c, B) =
\begin{cases}
0, & c \ge B/2,\\
\sup_t \left( tc - \log \frac{1 + e^t + \dots + e^{Bt}}{B + 1} \right), & c < B/2.
\end{cases}
\]
\end{lem}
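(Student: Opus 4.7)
The plan is to recognize $|W(m, L, B)|$ as a counting problem equivalent to the probability that a sum of i.i.d.\ uniform random variables does not exceed $L$, and then invoke Cram\'er's theorem for large deviations.

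First, I would introduce $X_1, \ldots, X_m$ i.i.d.\ with each $X_i$ uniform on $\{0, 1, \ldots, B\}$. Since there are exactly $(B+1)^m$ equally likely $m$-tuples in $\{0, \ldots, B\}^m$, we have the identity
\begin{equation*}
|W(m, L, B)| = (B+1)^m \cdot \Pr\!\left(\sum_{i=1}^m X_i \le L\right).
\end{equation*}
Taking logarithms, dividing by $m$, and setting $L = \floor{rm}$ reduces the lemma to computing
\begin{equation*}
\lim_{m \to \infty} \frac{1}{m} \log \Pr\!\left(\tfrac{1}{m}S_m \le \tfrac{\floor{rm}}{m}\right),
\end{equation*}
where $S_m = X_1 + \cdots + X_m$. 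Since $\floor{rm}/m \to r$ and the probability is monotone in the threshold, this limit coincides with the large deviation rate of $\Pr(S_m/m \le r)$.

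Next I would split into two regimes based on the mean $E[X_i] = B/2$. When $r \ge B/2$, the weak law of large numbers gives $\Pr(S_m/m \le r) \to 1$ (or, at the boundary $r = B/2$, the central limit theorem gives convergence to $1/2$), so the logarithmic rate is $0$, matching $I(r, B) = 0$. When $r < B/2$, the event $\{S_m/m \le r\}$ is a genuine large deviation, and Cram\'er's theorem for bounded i.i.d.\ summands yields
\begin{equation*}
\lim_{m \to \infty} \frac{1}{m} \log \Pr(S_m/m \le r) = -\inf_{c \le r} \Lambda^*(c),
\end{equation*}
where $\Lambda^*(c) = \sup_t (tc - \Lambda(t))$ and $\Lambda(t) = \log\tfrac{1 + e^t + \cdots + e^{Bt}}{B+1}$. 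A short convexity argument shows that $\Lambda^*$ is strictly convex, attains its unique minimum $0$ at $c = B/2$, and is therefore strictly decreasing on $(-\infty, B/2]$, so $\inf_{c \le r} \Lambda^*(c) = \Lambda^*(r) = I(r, B)$.

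The only technical input is Cram\'er's theorem, whose hypotheses are trivially met since the $X_i$ are bounded and hence have a moment generating function finite on all of $\R$. Accordingly, I do not expect a genuine obstacle; the two bookkeeping points that require care are showing that replacing $\floor{rm}/m$ by $r$ introduces no loss in the exponential rate, and verifying that the lower-tail infimum collapses to $\Lambda^*(r)$ via monotonicity of $\Lambda^*$ on $(-\infty, B/2]$. Once these are disposed of, the formula in the lemma follows by combining the two regimes.
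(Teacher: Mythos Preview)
Your proposal is correct and follows essentially the same route as the paper: interpret $|W(m,\floor{rm},B)|/(B+1)^m$ as $\Pr(S_m \le \floor{rm})$ for i.i.d.\ uniform $X_i$ on $\{0,\dots,B\}$, dispose of the case $r\ge B/2$ via the law of large numbers (or CLT at the boundary), and invoke Cram\'er's theorem for $r<B/2$. If anything, you are slightly more careful than the paper in flagging the two bookkeeping points---replacing $\floor{rm}/m$ by $r$ and collapsing $\inf_{c\le r}\Lambda^*(c)$ to $\Lambda^*(r)$---which the paper handles only implicitly.
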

\begin{proof}
We regard each $x_i$ as an independent random variable uniformly distributed over the set $\{0, 1, \dots, B\}$. Then the mean value of each $x_i$ is $B/2$. By the law of large numbers, as $m \to \infty$,
\[
\frac{|W(m, \floor{mB/2}, B)|}{(B + 1)^m} = \mathbb P(x_1 + \dots + x_m \le mB/2) \to \frac12.
\]

If $r \ge B/2$, then $W(m, \floor{rm}, B) \supseteq W(m, \floor{mB/2}, B)$, so
\[
\liminf_{m\to\infty} \frac{|W(m, \floor{rm}, B)|}{(B + 1)^m} \ge \frac12.
\]
Then
\[
\liminf_{m\to\infty} (\log |W(m, \floor{rm}, B)| - m\log(B + 1)) \ge -\log2.
\]
Then
\[
\liminf_{m\to\infty} \frac{\log |W(m, \floor{rm}, B)|}m \ge \log(B + 1).
\]
On the other hand,
\[
\frac{\log |W(m, \floor{rm}, B)|}m \le \frac{\log (B + 1)^m}m = \log(B + 1),
\]
so
\[
\lim_{m\to\infty} \frac{\log|W(m, \floor{rm}, B)|}{m} = \log(B + 1).
\]

If $r < B/2$, note that $c \mapsto I(c, B)$ is the Legendre transformation of the generating function of the random variable $x_i$, so by Cramer's theorem \cite{Cramer},
\[
\lim_{m\to\infty} \frac{\log\mathbb P(x_1 + \dots + x_m \le mB/2)}{m} = -I(r, B).
\]
On the other hand,
\[
\log \frac{\mathbb P(x_1 + \dots + x_m \le mB/2)}m = \frac{\log|W(m, \floor{mB/2}, B)|}m - \log(B + 1),
\]
so
\[
\lim_{m\to\infty} \frac{\log|W(m, \floor{rm}, B)|}{m} = \log(B + 1) - I(r, B).
\]
\end{proof}

\begin{lem}\label{semicont}
For any $c_0 > 0$ we have
\[
\limsup_{c\to c_0} I(c, B) \le I(c_0, B).
\]
\end{lem}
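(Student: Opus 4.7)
The plan is to introduce the auxiliary function
\[
J(c) := \sup_{t \in \mathbb{R}} \left( tc - \log \frac{1 + e^t + \dots + e^{Bt}}{B+1} \right),
\]
so that by construction $I(c, B) = J(c)$ for $c < B/2$ while $I(c, B) = 0$ for $c \ge B/2$. As the pointwise supremum of functions that are affine (hence continuous) in $c$, $J$ is convex and lower semi-continuous. Since the cumulant generating function $\Lambda(t) = \log \frac{1 + e^t + \dots + e^{Bt}}{B+1}$ is smooth with derivative $\Lambda'(t) = \frac{\sum_{k=0}^B k e^{kt}}{\sum_{k=0}^B e^{kt}}$ taking values in $(0, B)$ as $t$ ranges over $\mathbb{R}$, the Legendre transform $J$ is finite on $(0, B)$, and classical convex analysis then guarantees that $J$ is in fact continuous on this open set.

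The key ingredient is the identity $J(B/2) = 0$. Since $\Lambda$ is convex, the objective $t \mapsto tB/2 - \Lambda(t)$ is concave, and its derivative at $t = 0$ equals $B/2 - \Lambda'(0) = B/2 - \frac{0 + 1 + \cdots + B}{B+1} = 0$. Hence $t = 0$ is the global maximizer, yielding $J(B/2) = 0$. Combined with the trivial bound $J(c) \ge 0$ (obtained by taking $t = 0$), this shows $I(c, B) \le J(c)$ everywhere on the relevant domain.

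With these tools in hand, I would split the argument according to $c_0$. If $c_0 \ne B/2$, then for $c$ in a small enough neighborhood of $c_0$ the sign of $c - B/2$ does not change, so $I(c, B)$ agrees throughout that neighborhood with either the constant $0$ or with $J(c)$, and continuity there yields the conclusion (in fact equality). The only delicate situation is $c_0 = B/2$, where $I(c_0, B) = 0$ but $c$ may approach from either side. There the bound $I \le J$ together with continuity of $J$ at $B/2$ gives $\limsup_{c \to B/2} I(c, B) \le J(B/2) = 0$, as required. The main conceptual obstacle is precisely this junction between the two branches of the piecewise definition, and the introduction of the majorant $J$ is what circumvents it; everything else reduces to standard convex-analytic facts about Legendre transforms of smooth strictly convex functions.
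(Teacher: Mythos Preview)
Your proof is correct but takes a different route from the paper's. You introduce the full Legendre transform $J(c)=\sup_{t}\bigl(tc-\Lambda(t)\bigr)$ as a global majorant of $I(\cdot,B)$, observe that $J$ is a convex function finite on the open interval $(0,B)$ and hence continuous there, and compute $J(B/2)=0$ to handle the junction between the two branches of $I$. The paper instead works directly with the supremum formula: it first uses the AM--GM inequality to show that for $c<B/2$ the supremum is attained on $t\le 0$, then localizes the maximizer to the compact interval $\bigl[-2\log(B+1)/c_0,\,0\bigr]$, and finishes with an explicit $\varepsilon$--$\delta$ argument exploiting the Lipschitz dependence $|t_0c-t_0c_0|\le \frac{2\log(B+1)}{c_0}|c-c_0|$. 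Your argument is shorter and more conceptual, leaning on standard convex-analytic facts about Legendre transforms; the paper's argument is more self-contained and yields an explicit modulus of continuity, at the cost of a longer case analysis.
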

\begin{proof}
If $c_0 > B/2$ then both sides are zero so the inequality trivially holds.

If $c_0 < B/2$, by the AM--GM inequality, for $t \ge 0$ we have
\[
tc_0 - \log \frac{1 + e^t + \dots + e^{Bt}}{B + 1}
\le tc_0 - \log e^{\frac{Bt}2} = t\left( c_0 - \frac B2 \right) \le 0,
\]
with equality attained when $t = 0$. Hence for any $c < B/2$ we have
\[
I(c, B) = \sup_{t\le0} \left( tc - \log \frac{1 + e^t + \dots + e^{Bt}}{B + 1} \right).
\]
On the other hand, if $c > c_0/2$ and $t < -\frac{2\log(B + 1)}{c_0}$ then
\[
tc - \log \frac{1 + e^t + \dots + e^{Bt}}{B + 1}
< -\log(B + 1) - \log\frac1{B + 1} = 0,
\]
so for $c \in (c_0/2, B/2)$ we have
\[
I(c, B) = \sup_{t\in\left[-\frac{2\log(B + 1)}{c_0},0\right]} \left( tc - \log \frac{1 + e^t + \dots + e^{Bt}}{B + 1} \right).
\]
Then for any $\epsilon > 0$ there is $t_0\in\left[-\frac{2\log(B + 1)}{c_0},0\right]$ such that
\[
t_0c - \log \frac{1 + e^{t_0} + \dots + e^{Bt_0}}{B + 1} > I(c, B) - \epsilon.
\]
Then
\[
t_0c_0 - \log \frac{1 + e^{t_0} + \dots + e^{Bt_0}}{B + 1} > I(c, B) - \epsilon - \frac{2\log(B + 1)}{c_0}|c - c_0|.
\]
Then there is $\delta > 0$ such that for any $c \in (c_0 - \delta, c_0 + \delta)$ we have
\[
I(c_0, B) \ge t_0c_0 - \log \frac{1 + e^{t_0} + \dots + e^{Bt_0}}{B + 1} > I(c, B) - 2\epsilon,
\]
showing the claim.

If $c_0 = B/2$, then for any $c \ge c_0$, $I(c, B) = 0$, so
\[
\limsup_{c\to c_0+} I(c, B) = I(c_0, B) = 0.
\]
For $c \in (c_0/2, c_0)$ we also have
\[
t_0c_0 - \log \frac{1 + e^{t_0} + \dots + e^{Bt_0}}{B + 1} > I(c, B) - \epsilon - \frac{2\log(B + 1)}{c_0}|c - c_0|.
\]
On the other hand,
\[
t_0c_0 - \log \frac{1 + e^{t_0} + \dots + e^{Bt_0}}{B + 1} \le t_0c_0 - \log e^{\frac{Bt_0}2} = 0,
\]
so
\[
I(c, B) \le \epsilon + \frac{2\log(B + 1)}{c_0}|c - c_0|,
\]
and then
\[
\limsup_{c\to c_0-} I(c, B) \le 0 = I(c_0, B).
\]
\end{proof}

\section{Main result}
\begin{thm}
For the sums and differences of sets problem, we have $\theta \ge 1.173077$.
\end{thm}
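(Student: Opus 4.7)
The plan is to instantiate the lower bound
\[
\theta \ge 1 + \frac{\log(|U - U|/|U + U|)}{\log(2\max U + 1)}
\]
from the introduction with $U = f(W(m, \lfloor rm \rfloor, B))$, let $m \to \infty$, and optimize over the real parameter $r > 0$ and integer $B \ge 1$. For the injection $f$ I would take the coefficients $L_k = (2B + 1)^k$ rather than the recursion used in \cite{gyarmati2007}: since the coordinates of $W + W$ lie in $[0, 2B]$ and those of $W - W$ lie in $[-B, B]$, the base-$(2B + 1)$ encoding is still injective on both, and it gives $\max U < (2B + 1)^m$, hence $\log(2\max U + 1)/m \to \log(2B + 1)$.

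For the sumset, a short greedy/LP argument gives $W + W = W(m, 2L, 2B)$, so Lemma~\ref{large-deviation} yields $\log|W + W|/m \to \log(2B + 1) - I(2r, 2B)$. For the difference set, writing each $z \in W - W$ as $z = z^+ - z^-$ with $z^\pm \in [0, B]^m$ of disjoint supports gives the characterization
\[
W - W = \bigl\{z \in \mathbb{Z}^m : |z_i| \le B,\ \textstyle\sum_i z_i^+ \le L,\ \sum_i z_i^- \le L\bigr\}.
\]
I would then partition $\{1, \ldots, m\}$ into positive-support, negative-support, and zero parts of sizes $p$, $q$, $m - p - q$, expanding this to
\[
|W - W| = \sum_{p + q \le m}\binom{m}{p,\, q,\, m - p - q}\, N(p, L, B)\, N(q, L, B),
\]
where $N(s, L, B) = |W(s, L - s, B - 1)|$ counts tuples in $\{1, \ldots, B\}^s$ summing to at most $L$. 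Applying Lemma~\ref{large-deviation} to each $N$, Stirling to the multinomial, and Lemma~\ref{semicont} to exchange limit and supremum, the saddle-point analysis produces
\[
\lim_{m \to \infty}\frac{\log|W - W|}{m} = \sup_{u}\Bigl[-2u\log u - (1 - 2u)\log(1 - 2u) + 2u\bigl(\log B - I\bigl(\tfrac{r - u}{u},\, B - 1\bigr)\bigr)\Bigr]
\]
over $u \in [0, \min(r, 1/2)]$.

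Combining the three limits yields a closed-form expression $\Theta(r, B)$ that lower-bounds $\theta$. The final step is to numerically optimize $\Theta$ and verify that a specific choice delivers $\Theta(r, B) \ge 1.173077$; a small integer $B$ paired with an $r$ just below the threshold $B(B + 1)/(2(2B + 1))$, at which the saddle for $|W - W|$ transitions between the $I\equiv 0$ and $I>0$ regimes, is the natural region to explore. The chief technical obstacle will be making the saddle-point analysis of $|W - W|$ rigorous: Lemma~\ref{semicont} provides the upper semicontinuity of $I(\cdot, B - 1)$ needed for the supremum to be attained and to depend continuously on $(r, u)$, which is what makes the numerical optimum reliable.
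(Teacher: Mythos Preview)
Your proposal is correct and follows essentially the same route as the paper: take $U=g(W(m,\lfloor rm\rfloor,B))$ with base-$(2B+1)$ encoding, identify $W+W=W(m,2L,2B)$, evaluate both $|W\pm W|$ asymptotically via Lemma~\ref{large-deviation}, and optimize numerically over $r$ and $B$ (the paper finds the maximum at $B=5$).

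The one substantive difference is in how $|W-W|$ is expanded. You use the trinomial split into positive/negative/zero coordinates and then invoke the $p\leftrightarrow q$ symmetry to reduce to a one-parameter supremum in $u=p/m=q/m$. The paper instead quotes the binomial formula from \cite{gerbicz2025},
\[
d(U)=\sum_{k}\binom{m}{k}\,|W(k,L-k,B-1)|\,|W(m-k,L,B)|,
\]
which groups the nonnegative coordinates together, and optimizes over $a=k/L$. The two are equivalent: expanding the factor $|W(m-k,L,B)|$ by the number of strictly positive coordinates turns the paper's one-parameter sup into your two-parameter sup over $(p,q)$, whose optimum sits on the diagonal $p=q$ by symmetry. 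Thus $\sup_a$ of the paper's expression equals your $\sup_u$, and the resulting $\Theta(r,B)$ coincides. Your formulation is a bit cleaner because the symmetry is explicit; the paper's has the advantage of citing an existing closed formula. Your heuristic about the threshold $r\approx B(B+1)/(2(2B+1))$ is only a guide for where to search and is not needed for the argument; the paper simply runs \texttt{fminbnd} over $r\in(0.5,2)$.
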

\begin{proof}
Following \cite{gerbicz2025}, we construct the set $U$ as the image of $W(m, L, B)$ under the map $g: (x_1, \dots, x_m) \mapsto \sum_{k=0}^{m-1} x_k(2B + 1)^k$,
with $B > 0$ fixed and $m$ and $L\to\infty$. Then according to \cite{gyarmati2007}, we have
\[
\theta \ge 1 + \frac{\log d(U) - \log s(U)}{\log q(U)},
\]
where $d(U) = |U - U|$, $s(U) = |U + U|$ and $q(U) = 2\max(U) + 1$.

Since each coordinate $x_k \le B$, we have
\[
\max U \le B\sum_{k=0}^{m-1} (2B + 1)^k = B\frac{(2B + 1)^m - 1}{2B},
\]
so $q(U) = 2\max(U) + 1 \le (2B + 1)^m$, and then
\begin{equation}\label{q}
\frac{\log q(U)}m \le \log(2B + 1).
\end{equation}

In \cite{gerbicz2025} it was shown that $s(U) = W(m, 2L, 2B)$. We now let $L=\floor{rm}$.
Then $2L \le \floor{2rm}$, so $W(m, 2L, 2B) \subseteq W(m, \floor{2rm}, 2B)$,
so by Lemma \ref{large-deviation},
\begin{equation}\label{s}
\limsup_{m\to\infty} \frac{\log s(U)}m
\le \lim_{m\to\infty} \frac{W(m, \floor{2rm}, 2B)}m = \log(2B + 1) - I(2r, 2B).
\end{equation}

In \cite{gerbicz2025} it was also shown that
\[
d(U) = \sum_{k=0}^{\min(m,L)} {m \choose k}|W(k, L - k, B - 1)||W(m - k, L, B)|.
\]
Then for any $k = 0, \dots, \min(m, L)$ we have
\[
d(U) \ge {m \choose k}|W(k, L - k, B - 1)||W(m - k, L, B)|.
\]
We now let $k = \floor{aL}$, where $a \in (0, \min(1, 1/r))$ is to be determined.
Then
\[
L - k \ge (1 - a)L \ge \frac{1 - a}ak\ge \floor{\frac{1 - a}ak},
\]
so $W(k, L - k, B - 1) \supseteq W(k, \floor{\frac{1 - a}ak}, B - 1)$.
By Lemma \ref{large-deviation} then,
\[
\liminf_{m\to\infty} \frac{\log|W(k, L - k, B - 1)|}k
\ge \log B - I\left( \frac{1-a}a, B - 1 \right).
\]
Since
\[
k = \floor{aL} > aL - 1 = a\floor{rm} - 1 > arm - a - 1,
\]
we have $\frac km \to ar$ as $m \to \infty$, so
\begin{equation}\label{W1}
\liminf_{m\to\infty} \frac{\log|W(k, L - k, B - 1)|}m
\ge ar\left( \log B - I\left( \frac{1-a}a, B - 1 \right) \right).
\end{equation}

Note that in the set $W(m, k, 1)$, exactly $k$ of the $m$ coordinates are 1,
and the rest are 0, so ${m \choose k} = W(m, k, 1)$. Since $\frac km \to ar$
as $m \to \infty$, for any $c < ar$, if $m$ is sufficiently large then $k > cm$,
so $W(m, k, 1) \supseteq W(m, cm, 1)$. By Lemma \ref{large-deviation} then,
\begin{equation}\label{C}
\liminf_{m\to\infty} \frac{\log{m \choose k}}m
\ge \log2 - I\left( c, 1 \right).
\end{equation}
By Lemma \ref{semicont}, we then have
\[
\liminf_{m\to\infty} \frac{\log{m \choose k}}m
\ge \log2 - \limsup_{c\to ar-} I\left( c, 1 \right) \ge \log2 - I(ar, 1).
\]

Likewise,
\[
m - k < m - aL + 1 < m - arm + a + 1
\]
so
\[
\frac{L}{m - k} > \frac{rm - 1}{m - arm + a + 1} \to \frac{r}{1 - ar}
\]
as $m \to \infty$, so
\[
\liminf_{m\to\infty} \frac{\log|W(m - k, L, B)|}{m - k} \ge \log(B + 1) - I\left(\frac r{1 - ar}, B\right).
\]
Since $\frac{m-k}m \to 1 - ar$ as $m \to \infty$,
\begin{equation}\label{W2}
\liminf_{m\to\infty} \frac{\log|W(m - k, L, B)|}m \ge (1 - ar)\left( \log(B + 1) - I\left(\frac r{1 - ar}, B\right) \right).
\end{equation}
Adding \eqref{C}, \eqref{W1} and \eqref{W2} we get
\begin{align}
\nonumber
\liminf_{m\to\infty} \frac{d(U)}m
&\ge \log2 + ar\log B + (1 - ar)\log(B + 1) - I(ar, 1)\\
&- arI\left( \frac{1-a}a, B - 1 \right) - (1 - ar)I\left(\frac r{1 - ar}, B\right).
\label{d}
\end{align}

Putting \eqref{d}, \eqref{s} and \eqref{q} together we have
\[
\theta \ge \tfrac{\log2 + ar\log B + (1 - ar)\log(B + 1) - I(ar, 1) - arI\left( \frac{1-a}a, B - 1 \right) - (1 - ar)I\left(\frac r{1 - ar}, B\right) + I(2r, 2B)}{\log(2B + 1)}.
\]

We first maximize in $a$ over $(0, \min(1, 1/r))$, then maximize in $r > 0$,
and finally maximize in $B$. Thus theoretically we would have $\theta \ge \theta_0$,
where
\begin{align*}
\theta_0 &= \sup_{B\ge1}\sup_{r>0}\sup_{a\in(0,\min(1,1/r))}\\
&\tfrac{\log2 + ar\log B + (1 - ar)\log(B + 1) - I(ar, 1) - arI\left( \frac{1-a}a, B - 1 \right) - (1 - ar)I\left(\frac r{1 - ar}, B\right) + I(2r, 2B)}{\log(2B + 1)}.
\end{align*}
In reality we maximize $\theta - 1$ and take the supreme in a more restricted yetrelevant region:
\begin{align*}
\theta_0 - 1 &\ge \sup_{B=3}^{10}\sup_{r\in(0.5,2)}\sup_{a\in(0,\min(1,1/r))}\\
&\tfrac{\log2 + ar\log B + (1 - ar)\log(B + 1) - I(ar, 1) - arI\left( \frac{1-a}a, B - 1 \right) - (1 - ar)I\left(\frac r{1 - ar}, B\right) - \log(2B + 1) + I(2r, 2B)}{\log(2B + 1)}.
\end{align*}

We compute this (restricted) supreme in MATLAB\textsuperscript{\textregistered} 2024b
using the code listed in the next section, with the result given below:
\begin{table}[H]
\centering
\begin{tabular}{|c|c|c|c|c|}
\hline
$B$ & $\epsilon=10^{-4}$ & $\epsilon=10^{-6}$ & $\epsilon=10^{-8}$ & $\epsilon=10^{-10}$ \\
\hline
3 & 0.168700179529982 & 0.168700179627153 & 0.168700179627163 & 0.168700179627163 \\
\hline
4 & 0.172137890281832 & 0.172137890014041 & 0.172137890014121 & 0.172137890014121 \\
\hline
5 & 0.173077285664668 & 0.173077279785044 & {\color{blue}0.173077279785136} & {\color{blue}0.173077279785136} \\
\hline
6 & 0.172855934276473 & 0.172855932676982 & 0.172855932676998 & 0.172855932676998 \\
\hline
7 & 0.172060244111987 & 0.172060243359401 & 0.172060243360376 & 0.172060243360376 \\
\hline
8 & 0.170975347606027 & 0.170975345188620 & 0.170975345189401 & 0.170975345189401 \\
\hline
9 & 0.169749937706638 & 0.169749936705626 & 0.169749936705622 & 0.169749936705623 \\
\hline
10& 0.168465310171444 & 0.168465310634718 & 0.168465310634737 & 0.168465310634737 \\
\hline
\end{tabular}
\caption{Numerical data table}
\end{table}
Here $\epsilon$ is the tolerance used by MATLAB\textsuperscript{\textregistered}'s \texttt{fminunc} and \texttt{fminbnd} functions. Note that for $B = 5$ and $\epsilon = 10^{-8}$ or $10^{-10}$ the maximal values attained, marked in blue, are identical, up to 15 decimal places. Hence we have
\[
\theta \ge \theta_0 = 1.173077\dots
\]
\end{proof}

\section{MATLAB\textsuperscript{\textregistered} Code}
The main script is as follows:
\begin{verbatim}
B = 1:10;
a = zeros(max(B), 4);
for x = B
    a(x,:) = theta(x);
end
a
\end{verbatim}
The function \texttt{theta(B)} computes the optimal value for a fixed $B$,
with the tolerance set at $10^{-4}$, $10^{-6}$, $10^{-8}$ and $10^{-4}$ successively:
\begin{verbatim}
function ret = theta(B)
    ret = zeros(1, 4);
    ret(1) = theta_eps(B, 1e-4);
    ret(2) = theta_eps(B, 1e-6);
    ret(3) = theta_eps(B, 1e-8);
    ret(4) = theta_eps(B, 1e-10);
end
\end{verbatim}
The function \texttt{theta(B, eps)} computes the optimal value for a fixed $B$
at a fixed tolerance $\epsilon$:
\begin{verbatim}
function ret = theta_eps(B, eps)
    fun = @(r) -(logD(r, B, eps) - logW(2*r, 2*B, eps));
    options = optimset('TolX', eps);
    [~, val] = fminbnd(fun, 0.5, 2, options);
    ret = -val/log(2*B+1);
end
\end{verbatim}
The function \texttt{logD(r, B, eps)} maximizes the value
\[
\log2 + ar\log B + (1 - ar)\log(B + 1) - I(ar, 1) - arI\left( \frac{1-a}a, B - 1 \right) - (1 - ar)I\left(\frac r{1 - ar}, B\right)
\]
in $a$ over the interval $(0, \min(1, 1/r))$, at a fixed tolerance $\epsilon$:
\begin{verbatim}
function ret = logD(r, B, eps)
    fun = @(a) -term(r, B, a, eps);
    options = optimset('TolX', eps);
    [~, val] = fminbnd(fun, 0, min(1, 1/r), options);
    ret = -val;
end
\end{verbatim}
The function \texttt{term(r, B, a, eps)} computes the value above, at a fixed tolerance $\epsilon$:
\begin{verbatim}
function ret = term(r, B, a, eps)
    ret = logW(a*r, 1, eps);
    ret = ret + a*r * logW(1/a-1, B-1, eps);
    ret = ret + (1-a*r) * logW(1/(1/r-a), B, eps);
end
\end{verbatim}
The function \texttt{logW(c, B, eps)} computes the value $\log(B + 1) - I(c, B)$,
at a fixed tolerance $\epsilon$:
\begin{verbatim}
function ret = I(c, B, eps)
    if c < B/2
        ret = log(B + 1) - sup(c, B, eps);
    else
        ret = log(B + 1);
    end
end
\end{verbatim}
Finally the function \texttt{sup(c, B, eps)} computes the supremum
\[
\sup_t \left( tc - \log \frac{1 + e^t + \dots + e^{Bt}}{B + 1} \right)
\]
for fixed $c$ and $B$, at a fixed tolerance $\epsilon$:
\begin{verbatim}
function ret = sup(c, B, eps)
    logE = @(t) log(mean(exp(t * (0:B))));
    opt = optimoptions('fminunc', 'Display','notify',...
        'OptimalityTolerance',eps,...
        'StepTolerance',eps,...
        'FunctionTolerance',eps);
    [~, ret] = fminunc((@(t) logE(t) - t * c), 0, opt);
    ret = -ret;
end\end{verbatim}

\end{document}